\documentclass[12pt]{amsart}
\usepackage[english]{babel}
\usepackage{amsfonts}
\usepackage{amsthm}
\usepackage{amsbsy}
\usepackage{amssymb}
\usepackage{graphicx}
\usepackage{eso-pic}
\usepackage{tikz}
\usepackage{xfrac}
\usepackage{float}
\usepackage[bookmarks=true,colorlinks=true,urlcolor=black,linkcolor=black,citecolor=black,pagebackref=false]{hyperref}
\usepackage{resizegather}
\usepackage{bm}
\usepackage{bookmark}
\usepackage{amsmath}
\usepackage{subfigure}
\usepackage{hyperref}
\usepackage{scalerel,stackengine}
\usepackage{epigraph}
\usetikzlibrary{matrix,arrows}
\usepackage[letterpaper,margin=1.0in,top=1.25in,bottom=1.3in]{geometry}
\usepackage{microtype}
\vbadness 10000
\hbadness 10000

\makeatletter
\newcommand\footnoteref[1]{\protected@xdef\@thefnmark{\ref{#1}}\@footnotemark}
\makeatother

\newtheorem{lemma}{Lemma}[section]
\newtheorem{thm}[lemma]{Theorem}
\newtheorem{prop}[lemma]{Proposition}
\newtheorem{cor}[lemma]{Corollary}
\newtheorem*{cor*}{Corollary}

\theoremstyle{definition}
\newtheorem{defn}[lemma]{Definition}

\theoremstyle{remark}

\stackMath
\newcommand\reallywidehat[1]{%
\savestack{\tmpbox}{\stretchto{%
  \scaleto{%
    \scalerel*[\widthof{\ensuremath{#1}}]{\kern-.6pt\bigwedge\kern-.6pt}%
    {\rule[-\textheight/2]{1ex}{\textheight}}
  }{\textheight}%
}{0.5ex}}%
\stackon[1pt]{#1}{\tmpbox}%
}
\parskip .5ex

\author{Leonardo Ferrari}
\address{Dipartimento di Matematica, Universit\`{a} di Pisa, Largo Bruno Pontecorvo 5, Pisa, Italy}
\email{leonardocpferrari at gmail dot com}

\author{Alexander Kolpakov}
\address{Institut de Math\'ematiques, Universit\'e de Neuch\^atel, Rue Emile-Argand 11, Neuch\^atel, Suisse / Switzerland}
\email{kolpakov dot alexander at gmail dot com}

\author{Leone Slavich}
\address{Dipartimento di Matematica ``F. Casorati'', Universit\`{a} di Pavia, Via Ferrata 5, Pavia, Italy}
\email{leone dot slavich at gmail dot com}

\thanks{A.K. was supported by the Swiss National Science Foundation project no. PP00P2-170560.}

\title[Cusps of hyperbolic $4$-manifolds and homology spheres]{Cusps of hyperbolic $4$-manifolds \\ and rational homology spheres}

\begin{document}

\begin{abstract}
In the present paper, we construct a cusped hyperbolic $4$-manifold with all cusp sections homeomorphic to the Hantzsche--Wendt manifold, which is a rational homology sphere. By a result of Gol\'enia and Moroianu, the Laplacian on $2$-forms on such a manifold has purely discrete spectrum. This shows that one of the main results of Mazzeo and Phillips from 1990 cannot hold without additional assumptions on the homology of the cusps. This also answers a question by Gol\'enia and Moroianu from 2012. 

We also correct and refine the incomplete classification of compact orientable flat $3$-manifolds arising from cube colourings provided earlier by the last two authors. 
\end{abstract}

\maketitle

\section{Introduction}

A Riemannian $n$--manifold $\mathcal{M}$ with a complete metric of constant sectional curvature $-1$ is called a {\itshape hyperbolic $n$--manifold}. Equivalently, one may think of $\mathcal{M}$ as a quotient space of the $n$--dimensional hyperbolic space $\mathbb{H}^n$ by a discrete torsion-free subgroup $\Gamma < \mathrm{Isom}(\mathbb{H}^n)$ of isometries. By replacing $\mathbb{H}^n$ with $\mathbb{X}^n = \mathbb{S}^n$ (spherical space) or $\mathbb{E}^n$ (Euclidean space) we can analogously define spherical (with constant sectional curvature $+1$) and Euclidean (or flat, with constant sectional curvature $0$) $n$--manifolds. 

Hyperbolic $n$--manifolds of finite volume split into two natural classes: compact and cusped ones. The latter ones are necessarily non-compact, and Margulis' lemma \cite[Theorem D.1.1]{BP} implies that their topological ends have the form $\mathcal{E} \times [0, +\infty)$ with warped product metric, where $\mathcal{E}$ is a compact Euclidean $(n-1)$--manifold called a cusp section. 

In this paper we are interested in understanding the possible structure of cusps of finite-volume hyperbolic $4$--manifolds and, in particular, we give a positive answer to a question posed by Gol\'enia and Moroianu \cite[Question 1]{GM}.

\begin{thm}\label{teo:main}
There exists a finite-volume orientable hyperbolic $4$--manifold $\mathcal{M}$ with all cusp sections homeomorphic to the Hantzsche--Wendt manifold.
\end{thm}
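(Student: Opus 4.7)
The plan is to construct $\calM$ as a finite manifold cover of the orbifold $\matH^4/W$, where $W$ is the right-angled reflection group generated by the facets of the ideal right-angled $24$-cell $P \subset \matH^4$. The polytope $P$ has $24$ three-dimensional octahedral facets and $24$ ideal vertices; at each ideal vertex the horospherical cross-section is a Euclidean $3$-cube whose six square faces are in one-to-one correspondence with the six octahedral facets of $P$ containing that vertex.

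The construction uses a \emph{facet coloring} of $P$: an assignment of colors in an elementary abelian $2$-group $G \cong (\matZ/2)^k$ to the facets, extended to a homomorphism $\rho \colon W \to G$ with torsion-free kernel. The resulting manifold $\calM = \matH^4/\ker \rho$ is tessellated by $|G|$ copies of $P$, and each of its cusp sections is obtained from the cubical link at the corresponding ideal vertex by pairwise identifications of its faces; the homeomorphism type of this closed flat $3$-manifold is determined entirely by the local pattern of colors, i.e.\ by the induced \emph{cube coloring}.

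The first step is local. I would classify the cube colorings that produce the Hantzsche--Wendt manifold, which is characterized among the six closed orientable flat $3$-manifolds by having holonomy $\matZ/2 \oplus \matZ/2$ and vanishing first rational Betti number. This is precisely the place where the earlier incomplete classification of the last two authors must be refined: part of the work is to identify the cube-coloring patterns previously overlooked and isolate those that realize the Hantzsche--Wendt manifold. A direct calculation on the cube, labelling faces by elements of $G$ and reading off the induced holonomy representation, should cut this classification problem down to a finite check.

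The second and more substantial step is global. I would produce an explicit facet coloring of $P$ (possibly after passing to a finite orbifold cover) realizing a Hantzsche--Wendt cube coloring at every one of the $24$ ideal vertices simultaneously, and verify that the resulting manifold is orientable. The main obstacle lies here: local colorings at adjacent vertices must agree on their shared octahedral facet, imposing nontrivial global compatibility constraints that couple the $24$ local choices together. I would first look for such a coloring directly on $P$, exploiting its large symmetry group to cut down the search space; if this turns out to be obstructed, I would pass to a small manifold cover of $\matH^4/W$ and enlarge $G$, using the additional freedom to force a globally consistent Hantzsche--Wendt pattern at every cusp. Once a coloring is in hand, orientability of $\calM$ and the identification of all cusp sections with the Hantzsche--Wendt manifold follow at once from the local data.
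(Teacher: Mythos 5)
Your proposal follows essentially the same route as the paper: color the ideal right-angled $24$-cell, use the classification of cube colorings to control the cusp sections, and exploit the symmetry of the $24$-cell to find a globally compatible coloring. You correctly identify both the local step (classify which cube colorings yield the Hantzsche--Wendt manifold, which is exactly what the paper's Proposition~\ref{prop:cube} does, pinning it down by $\beta^1 = 0$ among orientable flat $3$-manifolds) and the global obstacle (compatibility of the $24$ vertex links across shared facets), as well as the orientability check.

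Where the proposal stops short is precisely where the paper's main work lies: you say ``I would produce an explicit facet coloring \ldots exploiting its large symmetry group'' and hedge that you might need to pass to a further cover. The paper shows the direct approach succeeds at rank $4$, and the mechanism is the algebraic observation that the $24$ vertices of $\mathcal{Z}$ form the Hurwitz quaternion group $\mathfrak{H}$ (the binary tetrahedral group), which embeds in $\GL(4,\mathbb{F}_2)$; defining $\lambda(P) = P\cdot e_1$ for each vertex-matrix $P$ then gives a coloring whose admissible symmetry group contains $\mathfrak{H}$ acting by left multiplication. This transitivity is what reduces the $24$ local checks to a single octahedron and makes the global compatibility problem tractable. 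Without some such structural input your search-space reduction is unspecified, so the proposal is a sound plan but not yet a proof.
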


The Hantzsche--Wendt manifold is the only rational homology sphere among the $6$ possible homeomorphism types of compact orientable Euclidean $3$-manifolds (see \cite{HW} or \cite[Corollary 3.5.10]{Wolf}). To the best of our knowledge, the manifold $\mathcal{M}$ of Theorem \ref{teo:main} is the first example of an orientable complete non-compact finite volume hyperbolic $n$-manifold all of whose cusp sections are rational homology $(n-1)$-spheres.  

The other interesting fact is that the spectrum of the Laplacian $\Delta_2$ acting on $2$-forms on $\mathcal{M}$ is purely discrete by \cite[Theorem 1.2]{GM}, and this is the first example of an orientable hyperbolic manifold with such property. This contradicts the work of Mazzeo and Phillips \cite[Theorem 1.11]{MP}, which states that all finite volume, cusped hyperbolic $n$-manifolds should have non-empty continuous spectrum for the Laplacian $\Delta_k$ on $k$-forms, for all $k=0,\dots,n$. 

The issue here is that Mazzeo and Phillips' result \cite[Theorem~1.11]{MP} apparently holds under the tacit assumption that the cusped hyperbolic manifold $\mathcal{M}$ is orientable and that no cusp section of $\mathcal{M}$ has two consecutive Betti numbers equal to zero. Moreover, Gol\'enia and Moroianu \cite{GM} conjecture that the latter condition on non-vanishing of the Betti numbers cannot be taken for granted, which Theorem~\ref{teo:main} shows indeed to be the case. The fact that spectra of differential operators on cusped hyperbolic manifolds can be purely discrete was first remarked by B\"ar in \cite[Theorem~1]{Baer} for Dirac operators.

Also we note that the classification from \cite[Proposition 3.2]{KS?} of the so-called proper colourings of the $3$--dimensional cube is incomplete, and use the opportunity to correct the omissions. The resulting new classification turns out to be more succinct and transparent. 

\subsection*{Outline of the paper}
In Section \ref{sec:preliminaries} we review colourings on right-angled polytopes, how they can be used to define hyperbolic manifolds, and some of their properties. In Section \ref{sec:cubecolorings} we classify up to isometry all orientable manifolds arising as colourings of a $3$-dimensional unit cube, correcting and refining the incomplete classification from \cite[Proposition 3.2]{KS?}. In Section \ref{sec:24-cell-coloring} we introduce the right-angled hyperbolic $24$-cell, which is an ideal hyperbolic $4$-polytope, and exhibit a colouring which produces the manifold $\mathcal{M}$ of Theorem \ref{teo:main}. 

\section{Preliminaries}\label{sec:preliminaries}

A finite-volume polytope $\mathcal{P} \subset \mathbb{X}^n$ (for $\mathbb{X}^n = \mathbb{S}^n, \mathbb{E}^n, \mathbb{H}^n$ being spherical, Euclidean and hyperbolic $n$--dimensional space, respectively, cf. \cite[Chapters 1--3]{Ratcliffe}) is called {\itshape right-angled} if any two codimension $1$ faces (or facets, for short) are either intersecting at right angles or disjoint. It is known that hyperbolic right-angled polytopes cannot exist if $n > 12$ \cite{Dufour, PV}\footnote{The upper bound for the dimension of a right-angled finite-volume hyperbolic polytope was shown to be 14 in \cite{PV}, and then improved to 12 in \cite{Dufour}.}, and examples are known up to dimension $n = 8$ \cite{PV}. A polytope is called {\itshape ideal} if all its vertices are located on the ideal boundary $\partial \mathbb{H}^n$. Ideal right-angled polytopes cannot exist if $n>8$, and the only examples known have dimensions $n \leq 4$ \cite{Kolpakov}.

\subsection{Colourings of right-angled polytopes}
One of the important properties of hyperbolic right-angled polytopes is that their so-called colourings provide a rich class of hyperbolic manifolds. By inspecting the combinatorics of a colouring, one may obtain important topological and geometric information about the associated manifold.

\begin{defn}
Let $\mathcal{S}$ be an $n$--dimensional simplex with the set of vertices $\mathcal{V}=\{v_1,\dots,v_{n+1}\}$ and $W$ an $\mathbb{F}_2$-vector space. A map $\lambda: \mathcal{V} \rightarrow W$ is called a {\itshape colouring} of $\mathcal{S}$. The colouring of $\mathcal{S}$ is {\itshape proper} if the vectors $\lambda(v_i), \ i=1,\dots, n+1$ are linearly independent. 

Let $K$ be a simplicial complex with the set of vertices $\mathcal{V}$. Then a {\itshape colouring} of $K$ is a map $\lambda: \mathcal{V} \rightarrow W$. A colouring $\lambda$ of $K$ is {\itshape proper} if $\lambda$ is proper on each simplex in $K$. 
\end{defn}

Notice that if $\mathcal{P} \subset \mathbb{X}^n$ is a compact right-angled polytope then $\mathcal{P}$ is necessarily simple and the boundary complex $K_\mathcal{P}$ of its dual $\mathcal{P}^*$ is a simplicial complex.

If the polytope $\mathcal{P} \subset \mathbb{H}^n$ is right-angled and non-compact, it will necessarily have some ideal vertices, and the boundary of its dual $\mathcal{P}^*$ will not be simplicial anymore. However, all of its codimension $k>1$ faces will be simplices, and we can consider the maximal simplicial subcomplex $K_\mathcal{P}$ of the boundary of $\mathcal{P}^*$.

\begin{defn}
Let $\mathcal{P} \subset \mathbb{X}^n$ be a polytope with the set of facets $\mathcal{F}$ and $K_\mathcal{P}$ be the maximal simplicial subcomplex of the boundary of $\mathcal{P}^*$. A \textit{colouring} of $\mathcal{P}$ is a map $\lambda: \mathcal{F} \rightarrow W$, where $W$ an $\mathbb{F}_2$-vector space. This map naturally defines a colouring of $K_\mathcal{P}$. Then $\lambda$ is called \textit{proper} if the induced colouring on $K_\mathcal{P}$ is proper. 
\end{defn}

If the polytope $\mathcal{P}$ or the vector space $W$ are clear from the context, then we will omit them and simply refer to $\lambda$ as a colouring. The \textit{rank} of $\lambda$ is the $\mathbb{F}_2$-dimension of $\mathrm{im}\, \lambda$. We will always assume that colourings are surjective, in the sense that the image of the map $\lambda$ is a generating set of vectors for $W$.

A colouring of a right-angled $n$-polytope $\mathcal{P}$ naturally defines a homomorphism, which we still denote by $\lambda$ without much ambiguity, from the associated right-angled Coxeter group~$\Gamma(\mathcal{P})$ (generated by reflections in all the facets of $\mathcal{P}$) into $W$ (with its natural group structure). Being a Coxeter polytope, $\mathcal{P}$ has a natural orbifold structure as the quotient $\mathbb{X}^n /_{\Gamma(\mathcal{P)}}$.

\begin{prop}[\cite{KS?}, Proposition 2.1]
If the colouring $\lambda$ is proper, then $\ker \lambda < \Gamma(\mathcal{P})$ is torsion-free, and $\mathcal{M}_\lambda = \mathbb{X}^n /_{\ker \lambda}$ is a manifold.
\end{prop}

If the dimension of the vector space $W$ is minimal, i.e. equal to the maximum number of vertices in the simplices of $\mathcal{P}^*$, $\mathcal{M}_\lambda$ is called a \textit{small cover} of $\mathcal{P}$. This is equivalent to $\text{dim} \, _{\mathbb{F}_2} W =n$ if $\mathcal{P}$ has proper hyperbolic vertices, and $\text{dim} \, _{\mathbb{F}_2} W =n-1$ if $\mathcal{P}$ is ideal.

\begin{defn}
Let $\mathrm{Sym}(\mathcal{P})$ be the group of symmetries of a polytope $\mathcal{P}\subset \mathbb{X}^n$ having set of facets $\mathcal{F}$. Two $W$--colourings $\lambda,\mu:\mathcal{F}\to W$ of $\mathcal{P}$ are called \textit{equivalent} (or \textit{$DJ$--equivalent}, after Davis and Januszkiewicz \cite{DJ}) if there exists a combinatorial symmetry $s \in \mathrm{Sym}(\mathcal{P})$ and an invertible linear transformation $m \in \mathrm{GL}(W)$ such that $\lambda = m \circ \mu \circ s$. 
\end{defn}

It is easy to see that $DJ$-equivalent proper colourings of a polytope $\mathcal{P} \subset \mathbb{X}^n$ define isometric manifolds.

\subsection{Ideal vertices and cusp sections}\label{sec:ideal-vertices} When $\mathcal{P}$ has ideal vertices, it is also natural to determine the Euclidean cusp sections of the manifold $\mathcal{M}_{\lambda}$ (see \cite{KS?}). If $p$ is an ideal vertex of $\mathcal{P}$, its Euclidean vertex figure $E_p$ is the intersection of $\mathcal{P}$ with a sufficiently ``small'' horosphere centred at $p$. Each facet of $E_p$ will correspond to a unique facet of $\mathcal{P}$ containing~$p$.

Since $\mathcal{P}$ is right-angled, the vertex figure $E_p$ has to be a right-angled Euclidean polytope of dimension $n-1$, uniquely determined up to homothety. This polytope will correspond to a fundamental domain for the maximal affine Coxeter subgroup $\Gamma_p < \Gamma(\mathcal{P})$ generated by the reflections in the facets of $\mathcal{P}$ containing $p$.  If $\mathcal{P}$ is regular then also $E_p$ has to be regular, and is therefore forced to be an $(n-1)$-cube. Given the colouring $\lambda$, let us consider its restriction to the subgroup $\Gamma_p$. The kernel of this restriction will correspond to a maximal parabolic subgroup in $\ker \lambda$. We thus see that the vertex $p$ lifts to a number of cusps in the manifold $\mathcal{M}_{\lambda}$, and for each such cusp the Euclidean cusp section corresponds to the manifold obtained by restricting the colouring of $\mathcal{P}$ to $E_p$. The number of copies of each cusp is equal to $[W:W']$, where $W'$ is the subspace of $W$ generated by the colours of the facets of $E_p$. 

\subsection{Symmetries of colourings}
Given a (not necessarily proper) colouring $\lambda$ of a right-angled polytope $\mathcal{P}$, there is a natural group of symmetries of the associated orbifold $\mathcal{M}_{\lambda}$ called its \textit{coloured isometry group}. We briefly recall its definition from \cite[Section 2.1]{KS?}.

\begin{defn}\label{def:admissible-symmetries}
Let $\lambda$ be a $W$--colouring of $\mathcal{P}$. A symmetry $s$ of $\mathcal{P}$ is \textit{admissible} with respect to $\lambda$ if:
\begin{enumerate}
    \item the maps $s$ induces a permutation of the colours assigned by $\lambda$ to the facets of $\mathcal{P}$,
    \item such permutation is realised by an invertible linear automorphism $\phi \in \mathrm{GL}(W)$. 
    \end{enumerate}
\end{defn}

Admissible symmetries are easily seen to form a subgroup, which we denote by $\mathrm{Adm}_{\lambda}(\mathcal{P})$, of the symmetry group of $\mathcal{P}$ and there is a naturally defined homomorphism $\Phi$ from $\mathrm{Adm}_{\lambda}(\mathcal{P})$ to $\mathrm{GL}(W)$.

Recall that a colouring $\lambda:\Gamma(\mathcal{P})\rightarrow W$  defines a regular orbifold cover $\pi: \mathcal{M}_{\lambda}\rightarrow \mathcal{P}$ with automorphism group $W$. The coloured isometry group $\mathrm{Isom}_c(\mathcal{M}_{\lambda})$ is defined as the group of symmetries of $\mathcal{M}_{\lambda}$ which are lifts of admissible symmetries of $\mathcal{P}$. There is a short exact sequence
\begin{equation*}
    1\rightarrow W \rightarrow \mathrm{Isom}_c(\mathcal{M}_{\lambda})\rightarrow \mathrm{Adm}_{\lambda}(\mathcal{P}) \rightarrow 1
\end{equation*}
which clearly splits, so that 
\begin{equation}\label{eq:semidirect_product}
    \mathrm{Isom}_c(\mathcal{M}_{\lambda})\cong W \rtimes\mathrm{Adm}_{\lambda}(\mathcal{P}).
\end{equation}
The action of $\mathrm{Adm}_{\lambda}(P)$ on $W$ is precisely the one induced by the homomorphism $\Phi$.

 One advantage of colourings with large groups of admissible symmetries is that properties such as properness, or the shape of eventual cusps, can be determined by looking at fewer conditions and ``pushing forward'' through the action of $\mathrm{Adm}_{\lambda}(\mathcal{P})$.

\subsection{Computing the Betti numbers of colourings.} Given a right-angled polytope $\mathcal{P} \subset \mathbb{X}^n$ with an $\mathbb{F}^k_2$--colouring $\lambda$, let us enumerate the facets $\mathcal{F}$ of $\mathcal{P}$ in some order. Then, we may think that $\mathcal{F} = \{ 1, 2, \ldots, m \}$. Let $\Lambda$ be the \textit{defining matrix} of $\lambda$, that consists of the column vectors $\lambda(1), \ldots, \lambda(m)$ exactly in this order. Then $\Lambda$ is a matrix with $k$ rows and $m$ columns. 

The manifold $\mathcal{M}_{\lambda}$ is homotopy equivalent to a non-positively curved cube complex $\mathcal{C}_{\lambda}$ obtained by dualising the tessellation of $\mathcal{M}_{\lambda}$ into copies of the polytope $\mathcal{P}$, so that for every codimension-$k$ stratum in the tessellation of $\mathcal{M}_{\lambda}$, there is a $k$-dimensional cube in the complex $\mathcal{C}_{\lambda}$.

The cube complex $\mathcal{C}_{\lambda}$ admits a locally standard $\mathbb{F}_2^k$-action, induced by the automorphism group of the cover $\mathcal{M}_{\lambda}\rightarrow \mathcal{P}$. This action endows $\mathcal{C}_{\lambda}$ with the structure of a real toric space $M^{\mathbb{R}}(K,\Lambda)$, where $K=K_{\mathcal{P}}$ is the maximal simplicial subcomplex of the dual polytope $\mathcal{P}^*$ and $\Lambda:\mathbb{F}_2^m \rightarrow \mathbb{F}_2^k$ is the linear map represented by the defining matrix $\Lambda$ (see \cite{CP2}). 

Let $\mathrm{Row}(\Lambda)$ denote the row space of $\Lambda$, while for a vector $\omega \in \mathrm{Row}(\Lambda)$ let $K_\omega$ be the simplicial subcomplex of the complex $K$ spanned by the vertices $i$ (also labelled by the elements of $\{1, 2, \ldots, m\}$) such that the $i$--th entry of $\omega$ equals $1$.

If $R$ is a commutative ring in which $2$ is a unit, the co-homology of $M=M^{\mathbb{R}}(K,\Lambda)$ can be computed through the following formula \cite[Theorem 1.1]{CP2}:
\begin{equation}\label{eq:cohomology}
H^p(M,R)\cong \underset{\omega \in \mathrm{Row}(\Lambda)}{\bigoplus}\widetilde{H}^{p-1}(K_{\omega},R). 
\end{equation}

In what follows we will be interested primarily in computing the Betti numbers of $\mathcal{M}_\lambda$. By applying equation (\ref{eq:cohomology}) for $R=\mathbb{Q}$ we obtain the following corollary.

\begin{cor}\label{cor:CP}
Let $\mathcal{P} \subset \mathbb{X}^n$ be a right-angled polytope with a proper colouring $\lambda$, and let $\Lambda$ be the defining matrix of the latter.  Then for the manifold $\mathcal{M}_\lambda$ we have
\begin{equation*}
\beta^i(\mathcal{M}_\lambda) = \sum_{\omega \in \mathrm{Row}(\Lambda)} \widetilde{\beta}^{i-1}(K_\omega),
\end{equation*}
where $\beta^i$ denotes the $i$--th Betti number, and $\widetilde{\beta}^i$ is the $i$--th reduced Betti number. 
\end{cor}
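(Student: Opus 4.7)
The plan is to apply equation \eqref{eq:cohomology} with a coefficient ring in which $2$ is a unit, and then pass from cohomology dimensions to Betti numbers via the universal coefficient theorem. The natural choice is $R=\mathbb{Q}$, since $2$ is invertible in $\mathbb{Q}$ and, for a space $X$ of finite type, $\dim_\mathbb{Q} H^p(X;\mathbb{Q})$ equals the rank of $H^p(X;\mathbb{Z})$, hence equals $\beta^p(X)$. (Equivalently, one can work with $R=\mathbb{Z}[1/2]$, tensor with $\mathbb{Q}$, and then appeal to the UCT to identify the resulting dimensions with Betti numbers.)

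First I would verify that Corollary~\ref{cor:CP} applies to our setting: since $\mathcal{P}$ is right-angled and the colouring $\lambda$ is proper, the discussion preceding the statement identifies $\mathcal{M}_\lambda$ (up to homotopy) with the real toric space $M^{\mathbb{R}}(K,\Lambda)$, where $K$ is the maximal simplicial subcomplex of $\mathcal{P}^\ast$ and $\Lambda$ is the defining matrix of $\lambda$. Hence Theorem~1.1 of \cite{CP2} is directly applicable and yields
\begin{equation*}
H^p(\mathcal{M}_\lambda;\mathbb{Q}) \;\cong\; \bigoplus_{\omega\in\mathrm{Row}(\Lambda)} \widetilde{H}^{p-1}(K_\omega;\mathbb{Q}).
\end{equation*}

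Next I would take $\mathbb{Q}$--dimensions of both sides. The left-hand side has dimension $\beta^p(\mathcal{M}_\lambda)$ by the universal coefficient theorem (applied to the finitely generated complex $\mathcal{C}_\lambda$ that is homotopy equivalent to $\mathcal{M}_\lambda$), while each summand on the right-hand side has dimension $\widetilde{\beta}^{p-1}(K_\omega)$ for the same reason. Summing the finitely many contributions gives exactly the claimed formula. There is no real obstacle here beyond keeping track of the fact that equation \eqref{eq:cohomology} cannot be invoked over $\mathbb{Z}$ directly (because $2$ must be a unit), which is why rationalising before counting dimensions is the cleanest route.
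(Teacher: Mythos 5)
Your proof is correct and follows essentially the same route as the paper: apply the Choi--Park isomorphism \eqref{eq:cohomology} and then pass to dimensions via the universal coefficient theorem. One small improvement worth noting: the paper's one-line justification invokes \eqref{eq:cohomology} ``for $R=\mathbb{Z}$,'' which does not literally satisfy the hypothesis of \cite[Theorem 1.1]{CP2} that $2$ be a unit in $R$; your choice of $R=\mathbb{Q}$ (or $\mathbb{Z}[1/2]$) is the right way to make the argument precise before counting dimensions.
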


We conclude this section by proving the following useful proposition.

\begin{prop}\label{prop:orientability_criterion}
Let $\mathcal{P} \subset \mathbb{X}^n$ be a right-angled polytope with a proper colouring $\lambda$. The manifold $\mathcal{M}_{\lambda}$ is orientable if and only if the vector $\varepsilon=(1,\dots,1)$ belongs to $\mathrm{Row}(\Lambda)$. 
\end{prop}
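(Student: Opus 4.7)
The plan is to identify $\pi_1(\mathcal{M}_\lambda)$ with $\ker \lambda$, where $\lambda : \Gamma(\mathcal{P}) \to W$ is viewed as a group homomorphism, and then to relate orientability of the free quotient $\mathbb{X}^n / \ker\lambda$ to the position of $\ker \lambda$ inside the orientation character of $\Gamma(\mathcal{P})$. The whole argument is essentially a translation of a group-theoretic containment of kernels into a linear-algebraic statement about $\Lambda$.

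First I would introduce the orientation homomorphism $\sigma : \Gamma(\mathcal{P}) \to \mathbb{F}_2$, determined by $\sigma(r_f) = 1$ for every facet reflection $r_f$. This is well defined because each $r_f$ reverses orientation and the right-angled Coxeter relations are automatically preserved in the abelian group $\mathbb{F}_2$. Since $\mathbb{X}^n$ is orientable and $\ker\lambda$ acts freely (by properness of $\lambda$, via Proposition~2.1 of \cite{KS?}) and properly discontinuously, the quotient $\mathcal{M}_\lambda$ is orientable if and only if $\ker\lambda \subseteq \ker\sigma$.

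Next I would show that $\ker\lambda \subseteq \ker\sigma$ is equivalent to the existence of a linear functional $\phi : W \to \mathbb{F}_2$ with $\sigma = \phi \circ \lambda$. One direction is immediate from the definition; for the other, using that colourings are surjective by our standing convention, $\sigma$ descends through $W \cong \Gamma(\mathcal{P})/\ker\lambda$ to a well-defined functional $\phi$.

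Finally, identifying $W$ with $\mathbb{F}_2^k$, every linear functional $\phi$ is of the form $\phi(w) = v \cdot w$ for a unique $v \in \mathbb{F}_2^k$. Testing the identity $\phi \circ \lambda = \sigma$ on each generator $r_f$, $f \in \{1,\ldots,m\}$, yields $v \cdot \lambda(f) = 1$, that is, $v^{T} \Lambda = (1,\ldots,1) = \varepsilon$. The existence of such a $v$ is exactly the statement $\varepsilon \in \mathrm{Row}(\Lambda)$, which completes the equivalence.

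The only mildly delicate point is the reduction in the first step: one needs that orientability of $\mathbb{X}^n/\ker\lambda$ is genuinely equivalent to $\ker\lambda$ consisting of orientation-preserving isometries. This is the standard fact that a free, properly discontinuous action on an orientable manifold yields an orientable quotient precisely when the action is by orientation-preserving diffeomorphisms, and it applies here without issue. Neither simplicity of $\mathcal{P}$ nor the possible presence of ideal vertices plays any further role in the argument beyond guaranteeing that $\mathcal{M}_\lambda$ is a manifold.
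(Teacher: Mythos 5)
Your argument is correct, and it proves the same equivalence by essentially the dual formulation of the paper's own proof. Both arguments hinge on the same reduction: $\mathcal{M}_\lambda$ is orientable iff $\ker\lambda$ lies in the orientation\mbox{-}preserving subgroup of $\Gamma(\mathcal{P})$, i.e.\ in the kernel of the orientation character $\sigma$ sending every facet reflection to $1$. Where you diverge is in how this is converted into a statement about $\Lambda$. You use surjectivity of $\lambda$ and the universal property of the quotient $W\cong\Gamma(\mathcal{P})/\ker\lambda$ to factor $\sigma=\phi\circ\lambda$, identify $\phi$ with a row vector $v$, and read off $v^{T}\Lambda=\varepsilon$; the existence of such a $v$ is precisely $\varepsilon\in\mathrm{Row}(\Lambda)$. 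The paper instead pushes $\Gamma(\mathcal{P})$ forward to $\mathbb{F}_2^m$ via the ``exponent parity'' homomorphism $\gamma\mapsto v_\gamma$, observes $\lambda(\gamma)=\Lambda v_\gamma$, translates orientability into the condition that the right kernel $\ker(\Lambda)$ consists only of even\mbox{-}weight vectors (i.e.\ is contained in $\varepsilon^\perp$), and finishes with $\mathrm{Row}(\Lambda)=\ker(\Lambda)^\perp$. So your version works on row space directly while the paper works on the column kernel and dualises at the end; yours is arguably a shade cleaner since it avoids the orthogonal\mbox{-}complement identity and the implicit surjectivity of the abelianisation map, while the paper's phrasing stays closer to the combinatorics of words in $\Gamma(\mathcal{P})$. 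One small remark: your reduction relies on surjectivity of $\lambda$ to get $\Gamma(\mathcal{P})/\ker\lambda\cong W$; without that convention you would get a functional on $\mathrm{Im}(\lambda)$ and need to extend it to $W$, which is harmless but worth noting.
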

\begin{proof}
The manifold $\mathcal{M}_{\lambda}$ is orientable if and only if the kernel of the associated homomorphism $\lambda:\Gamma(\mathcal{P})\rightarrow \mathbb{F}_2^k$ contains only orientation preserving elements, i.e.\ elements which are expressed as the product of an even number of reflections in the facets of $\mathcal{P}$. With each element $\gamma$ of $\Gamma(\mathcal{P})$ we can associate a vector $v_{\gamma}$ in $\mathbb{F}_2^m$ as follows: if the generator corresponding to the $i$-th facet of $P$ appears an odd number of times in a word representing $\gamma$, then the $i$-th entry of $v_{\gamma}$ is equal to $1$, and is zero otherwise. Notice that $v_{\gamma}$ does not depend on the particular choice of a word representing $\gamma$, since all the relations in the presentation of $\Gamma(\mathcal{P})$ involve an even number (possibly zero) of each generator. Indeed the above construction yields a group homomorphism from $\Gamma(\mathcal{P})$ to $\mathbb{F}_2^m$, and it easy to see that $\gamma \in \Gamma(\mathcal{P})$ is orientation preserving if and only if $v_{\gamma}$ is a vector with an even number of entries equal to $1$.

Now, since the $m$-th generator of $\Gamma(\mathcal{P})$ is mapped by $\lambda$ into the $m$-th column of the matrix $\Lambda$, for any $\gamma \in \Gamma(\mathcal{P})$, $\lambda(\gamma)=\Lambda \cdot v_{\gamma}$, and therefore $\mathcal{M}_{\lambda}$ is orientable if and only if the right kernel of $\Lambda$ contains only vectors with an even number of $1$ entries. Finally, vectors with an even number of $1$ entries are precisely those vectors which are orthogonal to $\varepsilon=(1,\dots,1)$. Since $\mathrm{Row}(\Lambda)= (\ker \Lambda)^\perp$, the proof is complete.
\end{proof}

\section{Colouring the 3-cube}\label{sec:cubecolorings}

In this section we take the opportunity to correct the incomplete classification of colouring of the unit $3$-dimensional cube that first appeared in \cite{KS?}. This classification will be essential later on in order to perform the main construction of the paper.  

Let $\mathcal{C} = [0,1]^3 \subset \mathbb{R}^3$ be the unit cube. We label its faces as shown in Figure~\ref{fig:cube}, so that the pairs of parallel faces have indices $\{1,2\}$, $\{3,4\}$ and $\{5,6\}$. 

Given a colouring $\lambda: \Gamma(\mathcal{C}) \rightarrow W =  \mathbb{F}^k_2$, $k \geq 3$, the rank of the matrix $\Lambda$ clearly coincides with the rank of the colouring $\lambda$.
It is easy to see how the matrix $\Lambda$ changes under~$DJ$-equivalence: if we pre-compose the colouring with a symmetry $s  \in \mathrm{Sym}(\mathcal{C})$, we simply permute the columns of $\Lambda$ according to the action of the symmetry on the faces. If we post-compose the colouring with a linear transformation $m \in \mathrm{GL}(\mathbb{F}^k_2)$, we multiply $\Lambda$ on the left by the $k \times k$ matrix that represents the map $m$.
Clearly, the kernel of $\Lambda$ is invariant under post-composition with linear transformation of $\mathbb{F}^k_2$ , and so is $\mathrm{Row}(\Lambda) = (\ker \Lambda)^\perp$. 

\tikzset{every picture/.style={line width=0.75pt}} 

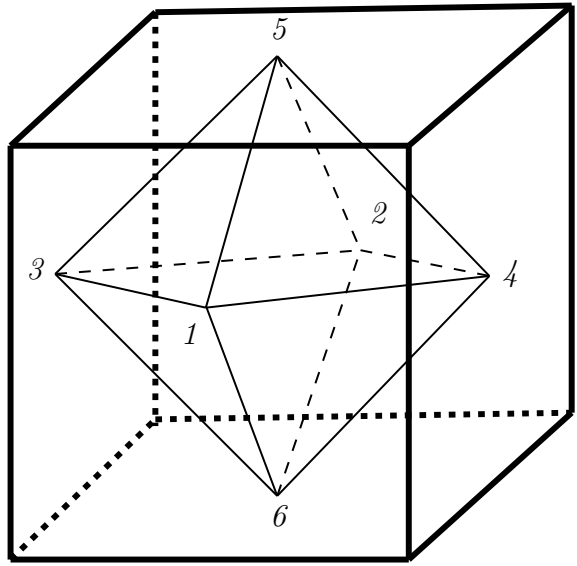
\begin{figure}
\centering
\begin{tikzpicture}[x=0.75pt,y=0.75pt,yscale=-1,xscale=1]

\draw    (427,30.5) -- (315,140.5) ;
\draw    (315,140.5) -- (339.87,146.06) -- (391,157.5) ;
\draw    (427,30.5) -- (391,157.5) ;

\draw    (315,140.5) -- (427,252.5) ;
\draw    (391,156.5) -- (427,252.5) ;

\draw    (391,157.5) -- (534,141.5) ;
\draw    (427,30.5) -- (534,141.5) ;
\draw    (534,141.5) -- (427,252.5) ;
\draw  [dash pattern={on 4.5pt off 4.5pt}]  (427,252.5) -- (469,128.5) ;
\draw  [dash pattern={on 4.5pt off 4.5pt}]  (427,30.5) -- (469,128.5) ;
\draw  [dash pattern={on 4.5pt off 4.5pt}]  (315,140.5) -- (469,128.5) ;
\draw  [dash pattern={on 4.5pt off 4.5pt}]  (469,128.5) -- (534,141.5) ;

\draw [line width=2.25]    (292.5,284.63) -- (493.34,284.63) ;
\draw [line width=2.25]  [dash pattern={on 2.53pt off 3.02pt}]  (292.5,284.63) -- (365.53,213.88) ;
\draw [line width=2.25]  [dash pattern={on 2.53pt off 3.02pt}]  (365.53,213.88) -- (575.5,210.51) ;
\draw [line width=2.25]    (493.34,284.63) -- (575.5,210.51) ;
\draw [line width=2.25]    (292.5,75.75) -- (493.34,75.75) ;
\draw [line width=2.25]    (292.5,75.75) -- (365.53,8.37) ;
\draw [line width=2.25]    (365.53,8.37) -- (575.5,5) ;
\draw [line width=2.25]    (493.34,75.75) -- (575.5,5) ;
\draw [line width=2.25]    (292.5,75.75) -- (292.5,284.63) ;
\draw [line width=2.25]    (493.34,75.75) -- (493.34,284.63) ;
\draw [line width=2.25]  [dash pattern={on 2.53pt off 3.02pt}]  (365.53,8.37) -- (365.53,217.25) ;
\draw [line width=2.25]    (575.5,5) -- (575.5,210.51) ;

\draw (377,164) node [anchor=north west][inner sep=0.75pt]   [align=left] {\textit{1}};
\draw (472,103) node [anchor=north west][inner sep=0.75pt]   [align=left] {\textit{2}};
\draw (422,256) node [anchor=north west][inner sep=0.75pt]   [align=left] {\textit{6}};
\draw (299,131) node [anchor=north west][inner sep=0.75pt]   [align=left] {\textit{3}};
\draw (422,10) node [anchor=north west][inner sep=0.75pt]   [align=left] {\textit{5}};
\draw (539,132) node [anchor=north west][inner sep=0.75pt]   [align=left] {\textit{4}};

\end{tikzpicture}

\caption{The cube $\mathcal{C}$ and its dual simplicial complex $\mathcal{O} = \mathcal{C}^*$ (the octahedron) with labelled faces (resp. vertices)}\label{fig:cube}
\end{figure}

Let $e_i$ be the standard $i$-th basis vector, i.e. a vector in $\mathbb{F}^k_2$ such that $(e_i)_j = \delta_{ij}$, for $1 \leq j \leq k$, where $\delta_{ij}$ is the Kronecker symbol. The \textit{$T$-set associated to a cube $\mathcal{C}$} as in Figure~\ref{fig:cube} is $T = \{ e_1 + e_2, \, e_3 + e_4, \, e_5 + e_6 \}$, which is understood as a subset of $\mathbb{F}^6_2$. In general, the $T$-set is a triple of vectors each having the form $e_i+e_j$, where the labels $i$ and $j$ correspond to a pair of opposite sides of $\mathcal{C}$: if the opposite sides of $\mathcal{C}$ have a different labelling, then the $T$-set has to be changed accordingly. 

\begin{prop}\label{prop:cube}
Let $\mathcal{C}$ be a cube and $T$ be its $T$-set. Let $\lambda: \Gamma(\mathcal{C}) \rightarrow \mathbb{F}^k_2$, $k\geq 3$, be a proper orientable colouring of $\mathcal{C}$ with defining matrix $\Lambda$, and let $\mathcal{M}_\lambda=\mathbb{R}^3 / \ker \lambda$ be the associated manifold. Then the following cases are possible:
\begin{enumerate}
    \item[(i)] if $\mathrm{Row}(\Lambda) \cap T = T$, then $\mathcal{M}_\lambda \cong \mathfrak{F}_1$ (the $3$-torus);
    \item[(ii)] if $\emptyset \neq \mathrm{Row}(\Lambda) \cap T \subsetneq T$, then $\mathcal{M}_\lambda \cong \mathfrak{F}_2$ (the ``half-twist'' manifold, i.e.\ the unique orientable Euclidean circle bundle over the Klein bottle);
    \item[(iii)] if $\mathrm{Row}(\Lambda) \cap T = \emptyset$, then $\mathcal{M}_\lambda \cong \mathfrak{F}_6$ (the Hantzsche-Wendt manifold).
\end{enumerate}
\end{prop}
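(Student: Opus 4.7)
The plan is to reduce the classification to computing the first Betti number $\beta_1(M_\lambda)$, via two observations: that the list of possible flat $3$-manifolds produced by orientable cube colourings is very short, and that these manifolds are already distinguished by $\beta_1$ alone.

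The Coxeter group $\Gamma(\mathcal{C}) \cong D_\infty \times D_\infty \times D_\infty$ of the cube tessellation of $\mathbb{E}^3$ has point group $\mathbb{F}_2^3 \subset \mathrm{O}(3)$, generated by the three coordinate-plane reflections $\rho_1, \rho_2, \rho_3$. Since the holonomy of the flat manifold $M_\lambda = \mathbb{E}^3/\ker\lambda$ is the image of $\ker\lambda$ in this point group, it is a subgroup of $\mathbb{F}_2^3$; orientability further constrains it to $\mathbb{F}_2^3 \cap \mathrm{SO}(3) \cong \mathbb{F}_2^2$, the subgroup spanned by the three half-turns $\rho_i\rho_j$. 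Among the six compact orientable flat $3$-manifolds, only $\mathfrak{F}_1$, $\mathfrak{F}_2$, and $\mathfrak{F}_6$ have holonomy of this form (trivial, $\mathbb{F}_2$, and $\mathbb{F}_2^2$ respectively), and they are distinguished by their first Betti numbers $3$, $1$, $0$.

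Applying Corollary~\ref{cor:CP} with $K = \mathcal{C}^*$ the octahedron on vertex set $\{1, \ldots, 6\}$ (whose edges are the non-antipodal pairs), I get $\beta_1(M_\lambda) = \sum_{\omega \in \mathrm{Row}(\Lambda)} \widetilde{\beta}^0(K_\omega)$. A short combinatorial inspection of the octahedron shows that the induced subcomplex $K_\omega$ on $S = \mathrm{supp}(\omega)$ is disconnected precisely when $|S| = 2$ and $S$ is one of the three antipodal pairs $\{1,2\}, \{3,4\}, \{5,6\}$, i.e.\ when $\omega \in T$ (in which case $K_\omega$ is two isolated points contributing $\widetilde{\beta}^0 = 1$); for every $S$ of size $\geq 3$ the induced subgraph is connected, as a quick case analysis over the possible number of antipodal pairs in $S$ confirms. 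This yields $\beta_1(M_\lambda) = |\mathrm{Row}(\Lambda) \cap T|$. Finally, Proposition~\ref{prop:orientability_criterion} gives $\varepsilon = (1,\dots,1) \in \mathrm{Row}(\Lambda)$, and since $\varepsilon$ equals the sum of the three elements of $T$, one sees immediately that $|\mathrm{Row}(\Lambda) \cap T| \in \{0,1,3\}$, matching cases (iii), (ii), (i) of the statement and identifying $M_\lambda$ with $\mathfrak{F}_6$, $\mathfrak{F}_2$, or $\mathfrak{F}_1$ respectively. The only delicate point is the connectedness check in the octahedron; otherwise the argument is a direct application of the cited results combined with the classical Bieberbach classification of flat $3$-manifolds.
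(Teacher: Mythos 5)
Your proof is correct and follows the same skeleton as the paper's --- compute $\beta^1(M_\lambda)$ via Corollary~\ref{cor:CP} by analysing the reduced $\widetilde{\beta}^0$ of induced subcomplexes of the octahedron, obtain $\beta^1(M_\lambda) = |\mathrm{Row}(\Lambda) \cap T|$, and use Proposition~\ref{prop:orientability_criterion} to force $|\mathrm{Row}(\Lambda) \cap T| \in \{0,1,3\}$. The genuine divergence is in how the middle case is resolved. The paper observes that $\beta^1 = 1$ is shared by four orientable flat $3$-manifolds ($\mathfrak{F}_2,\dots,\mathfrak{F}_5$) and breaks the tie by noting that coloured manifolds always admit orientation-reversing isometries, hence have vanishing $\eta$-invariant, which singles out $\mathfrak{F}_2$. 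You instead restrict the candidates \emph{a priori}: since the point group of $\Gamma(\mathcal{C})$ is $\mathbb{F}_2^3$ (the sign-change group), the holonomy of any orientable $M_\lambda$ lies in $\mathbb{F}_2^3 \cap \mathrm{SO}(3) \cong \mathbb{F}_2^2$, which excludes the holonomy groups $\mathbb{Z}/3$, $\mathbb{Z}/4$, $\mathbb{Z}/6$ of $\mathfrak{F}_3$, $\mathfrak{F}_4$, $\mathfrak{F}_5$ and leaves exactly $\mathfrak{F}_1$, $\mathfrak{F}_2$, $\mathfrak{F}_6$, already separated by $\beta^1 = 3, 1, 0$. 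Your route is arguably more elementary --- it bypasses the $\eta$-invariant entirely and relies only on the standard Bieberbach classification by holonomy --- and it also transparently explains \emph{why} only these three platycosms can arise from cube colourings, which the paper leaves implicit. Both arguments are sound; yours is a small but real simplification.
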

\begin{proof}

The dual polytope of $\mathcal{C}$ is a simplicial complex isomorphic to the octahedron $\mathcal{O}$ in Figure~\ref{fig:cube}. Given the labelling on $\mathcal{C}$, the vertices of $\mathcal{O}$ become also labelled accordingly. Note that $\widetilde{\beta}^0(\mathcal{O}_\omega)$ equals $0$ if $\mathcal{O}_\omega$ is a connected, and $1$ otherwise. Moreover, $\mathcal{O}_\omega$ is disconnected only for $\omega = \{1, 2\}$, $\{3, 4\}$ and $\{5, 6\}$. By using Corollary~\ref{cor:CP}, we have that $\beta^1(\mathcal{M}_\lambda) = 3$ if and only if $T \subset \mathrm{Row}(\Lambda)$, in which case we have exactly three complexes $\mathcal{O}_\omega$ with $\widetilde{\beta}^0(\mathcal{O}_\omega) = 1$, for $\omega \in T$, and $\widetilde{\beta}^0(\mathcal{O}_\omega) = 0$, for $\omega \notin T$. It follows from \cite{HW} (see also \cite[Corollary 3.5.10]{Wolf}) that $\mathfrak{F}_1$ is the only closed flat orientable $3$-manifold with $\beta^1$ equal to $3$.

An analogous argument shows that $\beta^1(\mathcal{M}_\lambda) = 0$ if and only if $T \cap \mathrm{Row}(\Lambda) = \emptyset$. The only closed flat orientable $3$-manifold with vanishing $\beta^1$ is $\mathfrak{F}_6$ (see \cite{HW} or \cite[Corollary 3.5.10]{Wolf}).

The remaining case is $\emptyset \neq \mathrm{Row}(\Lambda) \cap T \subsetneq T$. Moreover, the intersection can contain only a single vector from $T$. Indeed, since $\lambda$ is an orientable colouring, by Proposition \ref{prop:orientability_criterion} the vector $\varepsilon = \sum^6_{i=1} e_i$ belongs to $\mathrm{Row}(\Lambda)$. This readily implies that whenever $\mathrm{Row}(\Lambda)$ contains two elements of $T$, it contains their sum with $\varepsilon$, which is the third element of $T$. 

Thus, we have that $\beta^1(M_\lambda) = 1$. This does not completely identify $\mathcal{M}_\lambda$: four out of six closed orientable flat $3$--manifolds satisfy this condition. However, all manifolds obtained from colourings have vanishing $\eta$--invariant, since they always admit orientation reversing isometries. This is enough to conclude that $\mathcal{M}_\lambda$ is indeed $\mathfrak{F}_2$: the only Euclidean $3$-manifold with $\beta^1(M_\lambda) = 1$ (see \cite{HW} or \cite[Corollary 3.5.10]{Wolf}) and vanishing $\eta$--invariant (see \cite{Ouyang} or \cite[\S 2.5]{Martelli}).
\end{proof}

\subsection{$DJ$-equivalence and isometry classes of the colourings of the cube}
We now refine the analysis of the previous section, and classify up to isometry all the orientable manifolds arising from proper colourings of the unit cube. 

The first step is to produce a classification up to $DJ$-equivalence.  
To do so, we generate all possible orientable colourings of the unit cube using a computer and find representatives for all $DJ$-equivalence classes. For the reader's convenience, a \texttt{SageMath} worksheet which performs this computation is available on \href{https://github.com/sashakolpakov/24-cell-colouring}{\texttt{GitHub}} \cite{github}. 

Notice that the dimension of the vector space is constrained between $3$ (the rank of a small cover) and $6$ (the total number of faces of the cube). We then analyse the topology of the resulting manifolds. Finally, we distinguish colourings of the same rank and homomorphism type using properties which can be directly read off of the defining matrix and that are clearly invariant under $DJ$-equivalence.

\bigskip

\begin{center}
\textbf{Rank 3}
\end{center}
\begin{table}[h]
\begin{tabular}{ll}
\noindent \textbf{A 3-torus:}
$\begin{pmatrix}
1 & 1 & 0 & 0 & 0 & 0 \\
0 & 0 & 1 & 1 & 0 & 0 \\
0 & 0 & 0 & 0 & 1 & 1
\end{pmatrix}$, & 
\noindent and \textbf{a half-twist manifold:} 
$\begin{pmatrix}
1 & 1 & 0 & 0 & 0 & 1 \\
0 & 0 & 1 & 1 & 0 & 1 \\
0 & 0 & 0 & 0 & 1 & 1
\end{pmatrix}$
\end{tabular}
\end{table}

Thus, there are only two types of manifolds arising from the cube as small covers. More equivalence classes appear in higher ranks.  

\bigskip
\begin{center}
\newpage
\textbf{Rank 4}
\end{center}

\bigskip
\noindent \textbf{3-tori:}
\begin{table}[h]
\begin{tabular}{lll}
(1) $\begin{pmatrix}
1 & 0 & 0 & 0 & 0 & 0 \\
0 & 0 & 1 & 1 & 0 & 0 \\
0 & 0 & 0 & 0 & 1 & 1 \\
0 & 1 & 0 & 0 & 0 & 0
\end{pmatrix}$, &

(2) $\begin{pmatrix}
1 & 0 & 0 & 0 & 0 & 1 \\
0 & 0 & 1 & 1 & 0 & 0 \\
0 & 0 & 0 & 0 & 1 & 1 \\
0 & 1 & 0 & 0 & 0 & 1
\end{pmatrix}$, &

(3) $\begin{pmatrix}
1 & 0 & 0 & 1 & 0 & 1 \\
0 & 0 & 1 & 1 & 0 & 0 \\
0 & 0 & 0 & 0 & 1 & 1 \\
0 & 1 & 0 & 1 & 0 & 1
\end{pmatrix}$
\end{tabular}
\end{table}

These three tori can be distinguished by looking at the %
number of pairs of opposite faces with linearly independent colours. In case (1) it is just one pair, two pairs in case (2) and three pairs in case (3).

\bigskip

\noindent \textbf{Half-twist manifolds:}
\begin{table}[h!]
\begin{tabular}{ll}
(1) $\begin{pmatrix}
1 & 0 & 0 & 0 & 0 & 0 \\
0 & 0 & 1 & 1 & 0 & 1 \\
0 & 0 & 0 & 0 & 1 & 1 \\
0 & 1 & 0 & 0 & 0 & 1
\end{pmatrix}$, &

(2) $\begin{pmatrix}
1 & 0 & 0 & 0 & 0 & 1 \\
0 & 0 & 1 & 1 & 0 & 0 \\
0 & 0 & 0 & 1 & 1 & 1 \\
0 & 1 & 0 & 1 & 0 & 1
\end{pmatrix}$
\end{tabular}
\end{table}

To distinguish (1) from (2) we use the same criterion that we used for the rank 4 tori: %
there are two linearly independent pairs in (1) while there are three in (2).

\bigskip
\noindent \textbf{A Hantzsche-Wendt manifold:}
Finally, there is a unique colouring of the cube which yields the Hantzsche-Wendt manifold given below.

\bigskip

\begin{table}[h!]
\begin{tabular}{l}
$\begin{pmatrix}
1& 0& 0& 0& 0& 1\\
0& 0& 1& 1& 0& 1\\
0& 0& 0& 1& 1& 0\\
0& 1& 0& 1& 0& 1
\end{pmatrix}$
\end{tabular}
\end{table}

\bigskip
\begin{center}
\textbf{Rank 5}
\end{center}

\bigskip
\noindent \textbf{3-tori:}

\begin{tabular}{lll}
(1) $\begin{pmatrix}
1 & 0 & 0 & 0 & 0 & 0 \\
0 & 0 & 1 & 0 & 0 & 0 \\
0 & 0 & 0 & 0 & 1 & 1 \\
0 & 1 & 0 & 0 & 0 & 0 \\
0 & 0 & 0 & 1 & 0 & 0
\end{pmatrix}$, &

(2) $\begin{pmatrix}
1 & 0 & 0 & 0 & 0 & 1 \\
0 & 0 & 1 & 0 & 0 & 0 \\
0 & 0 & 0 & 0 & 1 & 1 \\
0 & 1 & 0 & 0 & 0 & 1 \\
0 & 0 & 0 & 1 & 0 & 0
\end{pmatrix}$, &

(3) $\begin{pmatrix}
1 & 0 & 0 & 0 & 0 & 1 \\
0 & 0 & 1 & 0 & 0 & 1 \\
0 & 0 & 0 & 0 & 1 & 1 \\
0 & 1 & 0 & 0 & 0 & 1 \\
0 & 0 & 0 & 1 & 0 & 1
\end{pmatrix}$
\end{tabular}

\bigskip

In order to distinguish (1) from (2) and (3) we again look at the dimensions of the vector spaces generated by the colours assigned to the pairs of opposite faces of the cube. %
There are two pairs of linearly independent colours in (1) and three in (2) and (3).

\bigskip

Now, to tell (2) apart from (3), we notice that all rows in (3) have an even number of $1$ entries, while in (2) there are two rows with an odd number of $1$ entries. This means that if we multiply these two matrices by the vector $\varepsilon = \sum^6_{i=1} e_i$ on the right, we obtain the zero vector in  case (3) and a non-zero vector in case (2). This property is invariant under $DJ$--equivalence, since this amounts to changing the matrices by either multiplying by a permutation matrix on the right (which fixes the vector $\varepsilon$) or by multiplying by an invertible matrix on the left (which maps non-zero vectors to non-zero vectors).

\bigskip

\noindent \textbf{A half-twist manifold:} There is only one class up to $DJ$--equivalence. 

\begin{center}
$\begin{pmatrix}
1 & 0 & 0 & 0 & 0 & 1 \\
0 & 0 & 1 & 0 & 0 & 1 \\
0 & 0 & 0 & 0 & 1 & 1 \\
0 & 1 & 0 & 0 & 0 & 0 \\
0 & 0 & 0 & 1 & 0 & 0
\end{pmatrix}$
\end{center}

\bigskip
\begin{center}
\textbf{Rank 6}
\end{center}
\noindent Finally, there is a unique rank 6 colouring which produces a $3$-torus: it corresponds to the $6\times 6$ identity matrix.

\bigskip
Now we focus our attention on the classification of the colourings of the $3$-cube up to isometry. Equivalent colourings clearly produce isometric manifolds, so we ask ourselves if two different $DJ$-equivalence classes can be isometric. We have the following.

\begin{prop}
All $DJ$--equivalence classes of colourings of the $3$-cube are pairwise non-isometric.
\end{prop}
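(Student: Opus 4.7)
The plan is to exhibit, for every pair of distinct DJ classes in the preceding list, an isometry invariant of $M_\lambda$ that distinguishes them. The degree of the orbifold cover $M_\lambda\to\mathcal{P}$ equals $|W|=2^{\mathrm{rank}\,\Lambda}$, and hence $\Vol(M_\lambda)=2^{\mathrm{rank}\,\Lambda}$. Since volume is an isometry invariant, colourings of different rank already yield non-isometric manifolds. Within a single rank, Proposition~\ref{prop:cube} separates colourings whose associated manifolds lie in different homeomorphism classes. Thus it only remains to distinguish, within the same rank and topological type, the three rank-$4$ tori from one another, the two rank-$4$ half-twist manifolds from each other, and the three rank-$5$ tori from one another.

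For these comparisons I would use the translation sublattice $L_\lambda\subset\pi_1(M_\lambda)$. Because $\pi_1(M_\lambda)$ is a Bieberbach group, $L_\lambda$ is its unique maximal free abelian normal subgroup of finite index, and therefore an isometry invariant of $M_\lambda$ as a lattice in $\mathbb{R}^3$. Moreover $L_\lambda$ can be read off the defining matrix $\Lambda$ directly. The translation subgroup of $\Gamma(\mathcal{P})$ is generated by the three commuting elements $\tau_i=r_{2i-1}r_{2i}$ ($i=1,2,3$), each acting on $\mathbb{R}^3$ as translation by $2$ along the $i$-th coordinate axis; an element $\tau_1^{a_1}\tau_2^{a_2}\tau_3^{a_3}$ lies in $\ker\lambda$ if and only if $a_1\xi_1+a_2\xi_2+a_3\xi_3=0$ in $W$, where $\xi_i=\lambda(r_{2i-1})+\lambda(r_{2i})$. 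Equivalently,
\[
L_\lambda=\bigl\{(2a_1,2a_2,2a_3)\in(2\mathbb{Z})^3:\ a_1\xi_1+a_2\xi_2+a_3\xi_3=0\text{ in }W\bigr\}.
\]

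With $L_\lambda$ in hand, the proof reduces to a direct computation. For each remaining colouring one reads off $\xi_1,\xi_2,\xi_3$ from $\Lambda$, writes down a basis for $L_\lambda$, and tabulates its shortest vectors. A routine check shows that in every remaining pair these data differ. For example, among the three rank-$4$ tori the lattices $L_\lambda$ contain respectively $4$, $2$, and $0$ primitive vectors of length $2$; the two rank-$4$ half-twist manifolds yield $2$ and $0$ such vectors; and the three rank-$5$ tori are already separated by the length of the shortest vector in $L_\lambda$, which equals $2$, $2\sqrt{2}$, and $2\sqrt{3}$, respectively. Since the shortest-vector data of $L_\lambda$ differ in every remaining case, the conclusion follows.

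The main obstacle is really bookkeeping: one must carry out the case-by-case comparison above without missing any pair. The conceptual content, namely that $L_\lambda$ is intrinsic to $M_\lambda$ and explicitly computable from $\Lambda$, is elementary once framed in this way.
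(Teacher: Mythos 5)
Your proof is correct and takes essentially the same route as the paper: separate DJ classes first by volume (rank of the colouring), then by homeomorphism type via Proposition~\ref{prop:cube}, and finally, within a fixed rank and topological type, by isometry invariants of the translation lattice. Your presentation is a bit more direct---you compute $L_\lambda$ algebraically from the relations $\sum a_i\xi_i=0$ instead of building fundamental domains and side-pairings, and you compare shortest-vector data rather than full geodesic length spectra of the maximal toric sub-cover---but both are invariants of the characteristic translation lattice, and your reported shortest-vector counts and lengths check out.
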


\begin{proof}
Non-homeomorphic manifolds are non-isometric. Manifolds with colourings of different rank are non-isometric either, since they are orbifold covers of the same orbifold with different degrees, and therefore have different volume. It is enough then to distinguish distinct $DJ$-equivalence classes with the same homeomorphism type and rank of the colouring. To do this, we first find fundamental domains and side-pairings for each manifold, using the methods in \cite[Section 3]{KS?}. We then look at the maximal translation sublattices in each lattice, and then check the length spectra of geodesics in the corresponding tori. 

The latter are generated as the lengths of integer linear combinations of the generating translations. Those spectra are pairwise distinct for each set of $DJ$-equivalence classes with the same homeomorphism type and rank, and therefore any two distinct $DJ$-equivalence classes of proper colourings of $\mathcal{C}$ correspond to a pair of non-isometric manifolds.
\end{proof}

\section{Constructing the 24-cell manifold}\label{sec:24-cell-coloring}

The regular Euclidean $24$-cell $\mathcal{Z}$ is a $4$-dimensional polytope which is realised as the convex hull in $\mathbb{R}^4$ of the following $24$ points:
\begin{equation*}
    (\pm1,0,0,0),\, (0,\pm1,0,0),\, (0,0,\pm1,0),\, (0,0,0,\pm1),\, 1/2\cdot(\pm1,\pm1,\pm1,\pm1).
\end{equation*}
The $24$-cell is self-dual, with $24$ octahedral facets and $96$ triangular faces. By interpreting the unit sphere in $\mathbb{R}^4$ as the boundary at infinity of hyperbolic space and taking the convex hull in $\mathbb{H}^4$ of these $24$ points we obtain the regular ideal hyperbolic $24$-cell, which is a regular right-angled hyperbolic polytope with $24$ ideal vertices, volume $\frac{4}{3} \pi^2$ and Euler characteristic $1$. Let this latter hyperbolic polytope be denoted by $\mathcal{Z}$. The context in which this notation is used should always resolve the ambiguity. For each vertex $p \in \mathcal{Z}$, its Euclidean vertex figure is a $3$-dimensional cube. 

Below, we exploit the great symmetry and associated algebraic structure of the regular $24$-cell $\mathcal{Z}$ in order to obtain a colouring that satisfies the following theorem. 

\begin{thm}\label{thm:main}
There exists a proper colouring $\lambda: \Gamma(\mathcal{Z}) \rightarrow \mathbb{F}^4_2$ of the ideal right-angled $24$-cell $\mathcal{Z} \subset \mathbb{H}^4$ such that the associated manifold $\mathcal{M} = \mathbb{H}^4 / \ker \lambda$ is orientable, and all of its cusp sections are homeomorphic to the Hantzsche-Wendt manifold. 
\end{thm}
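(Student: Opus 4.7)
The plan is to exhibit a single explicit colouring $\lambda: \mathcal{F} \to \mathbb{F}_2^4$ of the $24$ octahedral facets of $\mathcal{Z}$ and then verify the three required properties — properness, orientability, and Hantzsche--Wendt cusp sections — by reducing each to a manageable check via the symmetries of $\mathcal{Z}$. First I would fix an algebraic labelling of the $24$ facets, for instance by identifying them (using self-duality) with the units of the Hurwitz quaternions, equivalently with the short roots of the $F_4$ root system. Under this identification the action of $\mathrm{Sym}(\mathcal{Z})$ becomes transparent, and the combinatorial data relevant to the verification — namely which six facets are incident to a given ideal vertex $p$, and which three pairs among these correspond to pairs of opposite faces in the cubical vertex figure $E_p$ — can be read off directly from quaternionic arithmetic.

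I would then design an explicit $4 \times 24$ defining matrix $\Lambda$ in such a way that the admissible symmetry group $\mathrm{Adm}_\lambda(\mathcal{Z})$ acts transitively on ideal vertices (and ideally also on edges and $2$-faces). The local requirement at each vertex is dictated by Proposition \ref{prop:cube}(iii): the six columns of $\Lambda$ indexed by the facets of $E_p$ must form an orientable cube colouring whose row space is disjoint from the set $T_p$ of opposite-face colour sums. By the classification of Section \ref{sec:cubecolorings}, up to DJ-equivalence there is a unique rank-$4$ cube colouring realising the Hantzsche--Wendt manifold, which pins down the local datum at a chosen base vertex $p_0$. The task then becomes extending this local colouring consistently around the entire $24$-cell. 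This is the step where I expect the main obstacle to lie, because each facet of $\mathcal{Z}$ belongs to the vertex-cube of several ideal vertices, so the local Hantzsche--Wendt patterns at distinct vertices must be mutually compatible. Exactly as in Section \ref{sec:cubecolorings}, a short computer search over colourings carrying the chosen symmetry group should locate a working $\Lambda$ if one exists.

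Granting an explicit $\Lambda$ with a vertex-transitive admissible symmetry group, the verification becomes routine. Properness reduces, by equivariance, to checking at one codimension-$2$ face (the three facets meeting there must have linearly independent columns in $\Lambda$) and at one ideal vertex (the restricted cube colouring must be proper). Orientability follows from Proposition \ref{prop:orientability_criterion} by exhibiting a row vector $w \in \mathbb{F}_2^4$ with $w^\top \Lambda = (1,\dots,1)$, i.e., a single linear functional on $\mathbb{F}_2^4$ evaluating to $1$ on every facet colour. Finally, the Hantzsche--Wendt cusp condition is the statement that, for the $4\times 6$ submatrix $\Lambda_{p}$ formed by the colours of the six facets meeting at $p$, none of the three vectors $\lambda(F_i) + \lambda(F_j)$ corresponding to pairs of opposite faces of $E_p$ lies in $\mathrm{Row}(\Lambda_p)$; by Proposition \ref{prop:cube}(iii) this forces the cusp section at $p$ to be the Hantzsche--Wendt manifold $\mathfrak{F}_6$, and by equivariance the check at a single vertex suffices.
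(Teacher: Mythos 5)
Your plan is essentially the paper's: identify the $24$ facets (dually, vertices) of $\mathcal{Z}$ with the Hurwitz unit quaternions $\mathfrak{H}$, seek a colouring whose admissible symmetry group acts transitively on the cubical vertex figures, and then reduce all three verifications -- properness, orientability via Proposition~\ref{prop:orientability_criterion}, and the Hantzsche--Wendt cusp type via Proposition~\ref{prop:cube}(iii) -- to a single check at one vertex figure by equivariance. The paper does exactly this, and the consistency obstacle you flag (a facet colour is shared among several vertex figures, so the local Hantzsche--Wendt conditions must mesh) is precisely what its construction is designed to dissolve.

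The one real gap in your proposal is that you never exhibit the colouring: you defer it to ``a short computer search \ldots if one exists.'' The paper's contribution at this point is a closed-form construction that removes any conditionality. It takes an explicit injective homomorphism $\psi \colon \mathfrak{H} \to GL(4,\mathbb{F}_2)$ (with image a subgroup $H$), labels each vertex of $\mathcal{Z}$ by the corresponding matrix $P \in H$, and sets $\lambda(P) = P\cdot e_1$. Left multiplication by $H$ then automatically lies in $\mathrm{Adm}_\lambda(\mathcal{Z})$ and acts transitively on the octahedra, so the compatibility around the whole $24$-cell is built in by group theory rather than found by search. After that the routine checks you describe go through verbatim: the restriction to one octahedron gives a rank-$4$ defining matrix whose row space misses $T$ (so the cusp is $\mathfrak{F}_6$ by Proposition~\ref{prop:cube}), all colours $P\cdot e_1$ have odd weight so $\varepsilon \in \mathrm{Row}(\Lambda)$ and Proposition~\ref{prop:orientability_criterion} gives orientability, and properness at one triangle transports everywhere by the $H$-action. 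So you had the right scaffolding and the right verification plan; what you were missing is the Hurwitz-group-into-$GL(4,\mathbb{F}_2)$ trick that makes the existence step constructive instead of conjectural.
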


\begin{proof} Our first step will be finding a suitable realisation of the ``combinatorial'' $24$-cell $\mathcal{Z}$ that allows for an elegant and succinct description of the necessary colouring. Since the $24$-cell is self-dual, in order to give a colouring to $\mathcal{Z}$ it is sufficient to assign colours in $\mathbb{F}^4_2$ to its vertices, and the properness conditions translates to the condition that the colours assigned to a triple of vertices that span a triangle are linearly independent.

Points of $\mathbb{R}^4$ can be realised as elements of Hamilton's quaternion algebra $\boldsymbol{H}$ by mapping the point $(x_1,x_2,x_3,x_4)$ to $x_1 \cdot 1 + x_2 \cdot \mathbf{i}\ + x_3 \cdot \mathbf{j} + x_4 \cdot \mathbf{k}$.
With this map, the vertices of $\mathcal{Z}$ are mapped to unit quaternions and inherit a group structure (the operation being defined by quaternion multiplication), which makes them isomorphic to the Hurwitz group $\mathfrak{H}$ consisting of the $24$ integral and half-integral quaternions of unit norm \cite[Tables 12.2 -- 12.3]{Martelli-book}.

The Hurwitz quaternion group has presentation \cite[Table 1, p.~134]{CoxeterMoser}
\begin{equation}\label{eq:hurwitz-presentation}
    \mathfrak{H} \cong \langle s,t \,| \, (st)^2 = s^3 = t^3 \rangle,
\end{equation}
and can be generated by taking $s = \frac{1}{2} (1+\mathbf{i}+\mathbf{j}+\mathbf{k})$, $t = \frac{1}{2} (1+\mathbf{i}+\mathbf{j}-\mathbf{k})$. This group is also isomorphic to $SL(2, \mathbb{F}_3)$, and is sometimes called the binary tetrahedral group, as it is isomorphic to the preimage in $\mathrm{Spin}(3)$ of the group of orientation preserving symmetries of a regular tetrahedron.

The group $\mathfrak{H}$ acts on $\mathbb{R}^4$ through left and right quaternion multiplication. Since all elements of $\mathfrak{H}$ are unit quaternions, this is an action by isometries. We can therefore define a left action of $\mathfrak{H} \times \mathfrak{H}$ on $\boldsymbol{H} \cong \mathbb{R}^4$ by setting
$(q_1,q_2)\cdot x = q_1 \cdot x \cdot q_2^{-1}$.

The action above is not faithful: its kernel is the cyclic subgroup of order $2$ generated by the element $(-1,-1)$. It preserves the vertices of $\mathcal{Z}$ and defines an index-two subgroup of the orientation preserving symmetry group of $\mathcal{Z}$ isomorphic to $\mathfrak{H} \times \mathfrak{H}/_{<(-1,-1)>}$. 

Now, we notice that there exists an injective homomorphism $\psi$ from the group $\mathfrak{H}$ into the group $\mathrm{GL}(4, \mathbb{F}_2)$ with image the group $H$ described in Table~\ref{tab:24-cell}. This homomorphism can be obtained, for example, via
\begin{equation*}
    s \mapsto \left(\begin{array}{rrrr}
0 & 0 & 1 & 1 \\
0 & 1 & 0 & 1 \\
1 & 1 & 0 & 1 \\
0 & 1 & 0 & 0
\end{array}\right), \,\,\,
t \mapsto \left(\begin{array}{rrrr}
1 & 1 & 0 & 0 \\
0 & 0 & 0 & 1 \\
1 & 0 & 0 & 0 \\
1 & 0 & 1 & 0
\end{array}\right).
\end{equation*}

Indeed, a simple computation is enough to check that $\psi$ respects the relations of \eqref{eq:hurwitz-presentation}.

The kernel of $\psi$ is among the proper normal subgroups of $\mathfrak{H}$, which amount to the trivial group, the centre $\{ \pm 1 \}$, and $Q_8 = \{ \pm 1, \pm \mathbf{i}, \pm \mathbf{j}, \pm \mathbf{k} \}$. 

A straightforward computation shows
\begin{equation*}
    \psi(-1) = M =
\begin{pmatrix}
0& 1& 1& 1 \\
0& 1& 0& 0 \\
1& 1& 0& 1\\
0& 0& 0& 1
\end{pmatrix},
\end{equation*}
which implies that $\psi$ is a monomorphism. 
A computational proof of the injectivity of $\psi$ by using \texttt{SageMath} \cite{SageMath} is available on \href{https://github.com/sashakolpakov/24-cell-colouring}{\texttt{GitHub}} \cite{github}.

We can therefore label each vertex $q \in \mathfrak{H}$ of the $24$-cell with the corresponding matrix $\phi(q) \in H$. In the quaternion notation, two elements $p, q \in \mathfrak{H}$ represent adjacent vertices if and only if the real part of $p q^{-1}$ equals $\frac{1}{2}$. This follows readily from regularity of the $24$-cell and the fact that the vertices of the $24$--cell that are adjacent to the vertex $1 \in \mathfrak{H}$ are exactly those with real part $\frac{1}{2}$. 

Note that an element of $\mathfrak{H}$ has real part equal to $\frac{1}{2}$ if and only if it has order $6$. Thus, $p$ and $q$ being adjacent is equivalent to the element $p q^{-1}$ having order $6$. 
By applying the homomorphism $\psi$, we obtain an alternative combinatorial model for the $24$-cell such that
\begin{enumerate}
\item[(i)] the vertices of $\mathcal{Z}$ are labeled by the $24$ matrices in $H < \mathrm{GL}(4, \mathbb{F}_2)$;
\item[(ii)] two matrices $M$ and $N$ correspond to adjacent vertices if and only if $M\cdot N^{-1}$ has order $6$ in $H$;
\item[(iii)] there is a left action of $H \times H$ on $\mathcal{Z}$ with kernel the cyclic group of order $2$ generated by 
$(\psi(-1),\psi(-1)) = (M,M) $.
\end{enumerate}

The $\mathbb{F}_2^4$-colouring on the vertices of $\mathcal{Z}$ is now given via the matrix model using the following very simple formula:
\begin{eqnarray}\label{eq:coloring}
\left.\begin{aligned}
\lambda(P) &= P\cdot e_1 , \text{ for each vertex }  P \in H < \mathrm{GL}(4, \mathbb{F}_2). 
\end{aligned}\right.
\end{eqnarray} where $e_1$ denotes the first vector of the standard basis of $\mathbb{F}_2^4$.

\begin{table}
\vspace{0.5in}
\centering
\begin{gather*}
v_1 = \left(\begin{array}{rrrr}
1 & 0 & 0 & 0 \\
0 & 1 & 0 & 0 \\
0 & 0 & 1 & 0 \\
0 & 0 & 0 & 1
\end{array}\right),\,\,
v_2 = \left(\begin{array}{rrrr}
1 & 0 & 0 & 1 \\
0 & 1 & 0 & 0 \\
1 & 1 & 0 & 0 \\
1 & 1 & 1 & 0
\end{array}\right),\,\,
v_3 = \left(\begin{array}{rrrr}
0 & 0 & 1 & 0 \\
1 & 0 & 1 & 0 \\
0 & 0 & 1 & 1 \\
0 & 1 & 0 & 0
\end{array}\right),\\
v_4 = \left(\begin{array}{rrrr}
0 & 1 & 1 & 1 \\
0 & 1 & 0 & 1 \\
0 & 1 & 1 & 0 \\
1 & 0 & 1 & 1
\end{array}\right),\,\,
v_5 = \left(\begin{array}{rrrr}
0 & 1 & 1 & 0 \\
0 & 0 & 0 & 1 \\
1 & 1 & 0 & 1 \\
0 & 1 & 0 & 1
\end{array}\right),\,\,
v_6 = \left(\begin{array}{rrrr}
1 & 1 & 0 & 1 \\
1 & 1 & 1 & 0 \\
1 & 0 & 0 & 1 \\
0 & 1 & 0 & 1
\end{array}\right),\\
\left(\begin{array}{rrrr}
0 & 1 & 1 & 1 \\
0 & 1 & 0 & 0 \\
1 & 1 & 0 & 1 \\
0 & 0 & 0 & 1
\end{array}\right),\,\,
\left(\begin{array}{rrrr}
0 & 0 & 1 & 1 \\
1 & 0 & 1 & 1 \\
1 & 0 & 0 & 1 \\
1 & 1 & 1 & 0
\end{array}\right),\,\,
\left(\begin{array}{rrrr}
0 & 1 & 1 & 0 \\
0 & 1 & 0 & 0 \\
0 & 0 & 1 & 1 \\
1 & 1 & 1 & 0
\end{array}\right),\,\,
\left(\begin{array}{rrrr}
0 & 0 & 1 & 0 \\
1 & 0 & 1 & 1 \\
0 & 1 & 1 & 1 \\
0 & 0 & 0 & 1
\end{array}\right),\\
\left(\begin{array}{rrrr}
1 & 1 & 0 & 0 \\
1 & 0 & 1 & 1 \\
0 & 1 & 1 & 0 \\
1 & 1 & 1 & 0
\end{array}\right),\,\,
\left(\begin{array}{rrrr}
1 & 1 & 0 & 1 \\
1 & 0 & 1 & 1 \\
1 & 0 & 0 & 0 \\
0 & 0 & 0 & 1
\end{array}\right),\,\,
\left(\begin{array}{rrrr}
1 & 0 & 0 & 1 \\
1 & 0 & 1 & 0 \\
0 & 1 & 1 & 1 \\
1 & 0 & 1 & 1
\end{array}\right),\,\,
\left(\begin{array}{rrrr}
1 & 1 & 0 & 0 \\
0 & 0 & 0 & 1 \\
1 & 0 & 0 & 0 \\
1 & 0 & 1 & 0
\end{array}\right),\\
\left(\begin{array}{rrrr}
0 & 0 & 1 & 1 \\
0 & 1 & 0 & 1 \\
1 & 1 & 0 & 1 \\
0 & 1 & 0 & 0
\end{array}\right),\,\,
\left(\begin{array}{rrrr}
0 & 1 & 1 & 1 \\
1 & 1 & 1 & 0 \\
1 & 1 & 0 & 0 \\
1 & 0 & 1 & 0
\end{array}\right),\,\,
\left(\begin{array}{rrrr}
1 & 1 & 0 & 0 \\
0 & 1 & 0 & 1 \\
0 & 0 & 1 & 0 \\
0 & 1 & 0 & 0
\end{array}\right),\,\,
\left(\begin{array}{rrrr}
1 & 0 & 0 & 0 \\
1 & 1 & 1 & 0 \\
0 & 0 & 1 & 1 \\
1 & 0 & 1 & 0
\end{array}\right),\\
\left(\begin{array}{rrrr}
1 & 1 & 0 & 1 \\
1 & 0 & 1 & 0 \\
1 & 1 & 0 & 0 \\
0 & 1 & 0 & 0
\end{array}\right),\,\,
\left(\begin{array}{rrrr}
0 & 0 & 1 & 0 \\
1 & 1 & 1 & 0 \\
0 & 1 & 1 & 0 \\
0 & 1 & 0 & 1
\end{array}\right),\,\,
\left(\begin{array}{rrrr}
1 & 0 & 0 & 0 \\
0 & 1 & 0 & 1 \\
1 & 0 & 0 & 1 \\
1 & 0 & 1 & 1
\end{array}\right),\,\,
\left(\begin{array}{rrrr}
1 & 0 & 0 & 1 \\
0 & 0 & 0 & 1 \\
0 & 0 & 1 & 0 \\
0 & 1 & 0 & 1
\end{array}\right),\\
\left(\begin{array}{rrrr}
0 & 1 & 1 & 0 \\
1 & 0 & 1 & 0 \\
1 & 0 & 0 & 0 \\
1 & 0 & 1 & 1
\end{array}\right),\,\,
\left(\begin{array}{rrrr}
0 & 0 & 1 & 1 \\
0 & 0 & 0 & 1 \\
0 & 1 & 1 & 1 \\
1 & 0 & 1 & 0
\end{array}\right).
\end{gather*}
\caption{The matrices in $\mathrm{GL}(4, \mathbb{F}_2)$ as vertices of the regular $24$--cell}\label{tab:24-cell}
\end{table}

Before checking for properness of the colouring and looking at the cusp sections we notice that, by construction, the group of admissible symmetries $\mathrm{Adm}_{\lambda}(\mathcal{Z})$ contains $H$ as a subgroup acting on $\mathcal{Z}$ via \textit{left} multiplication. Indeed, left multiplication by a matrix $M \in H$ permutes the colours associated to the vertices precisely through multiplication by $M$, which is obviously a linear map.  

A more detailed check shows that $\mathrm{Adm}_{\lambda}(\mathcal{Z}) = H \times C_3$. Here $C_3$ is the cyclic group of order $3$ that acts on the vertices of $\mathcal{Z}$ via \textit{right} multiplication by 
\begin{equation*}
S = \left(\begin{array}{rrrr}
1 & 1 & 0 & 0 \\
0 & 1 & 0 & 1 \\
0 & 0 & 1 & 0 \\
0 & 1 & 0 & 0
\end{array}\right).
\end{equation*}

Indeed, notice that $S \in \psi(\mathfrak{H})$ according to Table~\ref{tab:24-cell}. Since the first column of $S$ is the vector $e_1$, then right multiplication by $S$ preserves the colours of the facets. A complete search for admissible symmetries following Definition \ref{def:admissible-symmetries} then proves that there are no other but the ones indicated above. The reader can find this computation performed with \texttt{SageMath} \cite{SageMath} available on \href{https://github.com/sashakolpakov/24-cell-colouring}{\texttt{GitHub}} \cite{github}. 

Therefore the induced homomorphism 
\begin{equation}\label{eq:homomorphism}
\Phi: \mathrm{Adm}_{\lambda}(\mathcal{Z}) = H \times C_3\rightarrow \mathrm{GL}(4,\mathbb{F}_2)\end{equation} 
is the identity on the left factor, and is trivial on the right factor.

We now have to check that the colouring of $\mathcal{Z}$ is proper. For this purpose, let us notice that $O = \{ v_1, v_2, v_3, v_4, v_5, v_6 \}$ (with the $v_i$'s as in Table~\ref{tab:24-cell}) forms an octahedron in $\mathcal{Z}$. This octahedron is dual to an (ideal) vertex $p$ of the ideal right-angled $24$-cell $\mathcal{Z}$ and therefore its vertices correspond to the generators of the maximal affine Coxeter subgroup $\Gamma_p<\Gamma(\mathcal{Z})$. By the discussion in Section \ref{sec:ideal-vertices}, the induced colouring on $\Gamma_p$ will determine the isometry class of a Euclidean cusp section of the orbifold $\mathbb{H}^4/\ker \lambda$. This latter colouring is easily seen to be proper, with the defining matrix  
\begin{equation*}
\Lambda = \left(\begin{array}{cccccc}
1& 1& 0& 0& 0& 1\\
0& 0& 1& 0& 0& 1\\
0& 1& 0& 0& 1& 1\\
0& 1& 0& 1& 0& 0
\end{array}\right).
\end{equation*}
The space $\mathrm{Row}(\Lambda)$ satisfies the following equations:
\begin{equation*}
    x_1+x_3+x_5+x_6=0, \, x_1+x_2+x_4+x_5=0
\end{equation*} and it is easy to check that no vector from the set $T=\{e_1+e_2,e_3+e_4,e_5+e_6\}$ belongs to $\mathrm{Row(\Lambda)}$. By Proposition \ref{prop:cube} the corresponding $DJ$-equivalence class is that of the rank $4$ colouring producing the Hantzsche-Wendt manifold.

Then the rest of the colouring is proper, since each octahedron in $\mathcal{Z}$ can be obtained from $O$ by left multiplication by $H$ and  each triangle in $\mathcal{Z}$ is an image of a triangle in $O$, thus $\mathcal{M}=\mathbb{H}^4 / \ker \lambda$ is a manifold. By computing the dimensions of the colourings of $\mathcal{Z}$ and $O$ (both equal to $4$), we conclude that $\mathcal{M}$ has $24$ cusps in total \cite[Proposition~2.2]{KS?}, all of whose sections are homeomorphic to the Hantzsche-Wendt manifold. Indeed, the coloured isometry group of $\mathcal{M}$ acts transitively on the cusps, again because the group of admissible symmetries acts transitively on the octahedra.

It is also easy to verify that $\lambda$ is an orientable colouring: since all its colours have an odd number of $1$ entries, the vector $\varepsilon=(1,\dots,1)$ belongs to $\mathrm{Row}(\lambda)$ and Proposition \ref{prop:orientability_criterion} applies. Thus $\mathcal{M}$ is an orientable manifold.
\end{proof}

\subsection{The geometry and topology of $\mathcal{M}$}
We conclude this paper by providing the reader with some more information on the manifold $\mathcal{M}$. The orbifold cover $\mathcal{M}\rightarrow \mathbb{H}^4/\Gamma(\mathcal{Z})$ corresponds to the kernel of a homomorphism from $\Gamma(\mathcal{Z})$ onto $\mathbb{F}_2^4$. Thus $\mathcal{M}$ is tessellated by $16$ copies of $\mathcal{Z}$: its volume equals $16 \cdot \frac{4}{3} \pi^2 = \frac{64}{3} \pi^2$ and its Euler characteristic is $\chi(\mathcal{M})=16$.

The tessellation of $\mathcal{M}$ into copies of $\mathcal{Z}$ is an Epstein-Penner decomposition for $\mathcal{M}$ with respect to the maximal cusp section (see \cite[Proposition 2.5]{KM} for details), and thus is preserved by $\mathrm{Isom}(\mathcal{M})$. Any isometry of $\mathcal{M}$ which preserves the tessellation into copies of $\mathcal{Z}$ lies in the coloured isometry group of $\mathcal{M}$, and therefore we are left with the task of computing this latter group. We have all the ingredients for this:
by (\ref{eq:semidirect_product}) and (\ref{eq:homomorphism}) the coloured isometry group of $\mathcal{M}$ is isomorphic to
\begin{equation}\label{eq:isometry group}
(\mathbb{F}_2^4 \rtimes H) \times C_3 \cong \mathrm{Isom}(\mathcal{M})
\end{equation} with the action of $H$ on $\mathbb{F}_2^4$ given by matrix multiplication.

By applying Corollary \ref{cor:CP} we compute the Betti numbers of $\mathcal{M}$. Namely,
\begin{equation*}
    \beta^0=1,\,\beta^1=0, \,\beta^2=38, \beta^3=23, \beta^4=0,
    \end{equation*}
which is consistent with the fact that $\mathcal{M}$ has Euler characteristic $16$.    

Finally, we remark the fact that the colouring described in equation (\ref{eq:coloring}) is the \emph{only} orientable rank $4$ colouring (up to $DJ$--equivalence) of the ideal right-angled $24$-cell $\mathcal{Z}$ which produces a manifold with all cusp sections homeomorphic to the Hantzsche-Wendt manifold. We also found a single rank $5$ $DJ$-equivalence class of colourings of the $24$-cell with these properties, but no such rank $6$ colourings. These facts were checked by the authors computationally using \texttt{SageMath} \cite{SageMath}.

\end{document}